\date{\today}
\newtheorem{theorem}{Теорема}%[section]
\newtheorem{proposition}{Твердження}
\newtheorem{corollary}{Наслiдок}
\newtheorem{lemma}{Лема}
\theoremstyle{definition}
\newtheorem{example}{Приклад}%[section]
\newtheorem{remark}{Зауваження}%[section]
\newtheorem{definition}[theorem]{Означення}%[section]
\begin{document}

\title[Про компактнi топологi\"{\i} на напiвгрупi скiнченних часткових порядкових iзоморфiзмiв~]{Про компактнi топологi\"{\i} на напiвгрупi скiнченних часткових порядкових iзоморфiзмiв обмеженого рангу нескiнченно\"{\i} лiнiйно впорядковано\"{\i}  множини}

\author[Олег~Гутік, Максим Щипель]{Олег~Гутік, Максим Щипель}
\address{Львівський національний університет ім. Івана Франка, Університецька 1, Львів, 79000, Україна}
\email{oleg.gutik@lnu.edu.ua, maksym.shchypel@lnu.edu.ua}

\keywords{Інверсна напівгрупа, частковий порядковий ізоморфізм, напівтопологічна напівгрупа, компактний, зліченно компактний, псевдокомпактний, цілком відокремлюваний, розріджений, компактифікація Бора.}

\subjclass[2020]{22M15, 54A10, 54D10, 54D30, 54H10}

\begin{abstract}
Ми  досліджуємо топологізацію напівгрупи $\mathscr{O\!\!I}\!_n(L)$ скiнченних часткових порядкових iзоморфiзмiв обмеженого рангу нескiнченно\"{\i} лiнiйно впорядковано\"{\i}  множини $(L,\leqslant)$. Зокрема, доведено, що кожна $T_1$ ліво-топологічна (право-топологічна) напівгрупа $\mathscr{O\!\!I}\!_n(L)$ є цілком гаусдорфовим, урисонівським, цілком відокремлюваним, розрідженим простором. Доведено, що на напівгрупі $\mathscr{O\!\!I}\!_n(L)$ існує єдина гаусдорфова зліченно компактна (псевдокомпактна) трансляційно-неперервна топологія, яка є компактною, а також, що компактифікація Бора гаусдорфової топологічної напівгрупи $\mathscr{O\!\!I}\!_n(L)$ є тривіальною напівгрупою.

\bigskip
\noindent
\emph{Oleg Gutik, Maksym Shchypel, \textbf{On compact topologies on the semigroup of finite partial order isomorphisms of a bounded rank of an infinite linear ordered set}.}

\smallskip
\noindent
We study topologization of the semigroup $\mathscr{O\!\!I}\!_n(L)$ of finite partial order isomorphisms of a bounded rank of an infinite linear ordered set $(L,\leqslant)$. In particular we show that every $T_1$ left-topological (right-topological) semigroup $\mathscr{O\!\!I}\!_n(L)$ is a  completely Hausdorff, Urysohn, totally separated, scattered space. We prove that on the semigroup $\mathscr{O\!\!I}\!_n(L)$ admits a unique Hausdorff countably compact (pseudocompact) shift-continuous topology  which is compact, and the Bohr compactification of a Hausdorff topological semigroup $\mathscr{O\!\!I}\!_n(L)$ is the trivial semigroup.

\end{abstract}

\maketitle

%\section{Термінологія та означення}

\section{Вступ}\label{section-1}

У цій праці ми користуємося термінологією з монографій \cite{Carruth-Hildebrant-Koch=1983, Clifford-Preston-1961, Clifford-Preston-1967, Engelking=1989, Lawson-1998, Petrich-1984, Ruppert=1984}.

Якщо визначене часткове відображення $\alpha\colon X\rightharpoonup Y$ з множини $X$ у множину $Y$, то через $\operatorname{dom}\alpha$ i $\operatorname{ran}\alpha$ будемо позначати його \emph{область визначення} та \emph{область значень}, відповідно, а через $(x)\alpha$ і $(A)\alpha$ --- образи елемента $x\in\operatorname{dom}\alpha$ та підмножини $A\subseteq\operatorname{dom}\alpha$ при частковому відображенні $\alpha$, відповідно.
Також через $\operatorname{rank}\alpha$ будемо позначати ранг часткового відображення $\alpha\colon X\rightharpoonup Y$, тобто $\operatorname{rank}\alpha=|\operatorname{dom}\alpha|$.

Нехай $(X,\leqq)$~--- частково впорядкована множина. Для довільного елемента $x$ з $(X,\leqq)$ позначимо
\begin{equation*}
  {\uparrow}_{\leqq}x=\left\{y\in X\colon x\leqq y\right\}, \qquad {\downarrow}_{\leqq}x=\left\{y\in X\colon y\leqq x\right\}  \qquad \hbox{i} \qquad {\downarrow}_{\leqq}^\circ x={\downarrow}_{\leqq}x\setminus\{x\}.
\end{equation*}

Надалі через $E(S)$ позначатимемо множину ідемпотентів напівгрупи $S$. Напівгрупа ідемпотентів називається \emph{в'язкою}, а комутативна напівгрупа ідемпотентів --- \emph{напівґраткою}.

Якщо $S$~--- напівгрупа, то на $E(S)$ визначено частковий порядок:
$
e\preccurlyeq f
$   тоді і лише тоді, коли
$ef=fe=e$.
Так означений частковий порядок на $E(S)$ називається \emph{при\-род\-ним}.

Напівгрупа $S$ називається \emph{інверсною}, якщо для довільного елемента $s\in S$ існує єдиний елемент $s^{-1}\in S$ такий, що $ss^{-1}s=s$ i $s^{-1}ss^{-1}=s^{-1}$~\cite{Wagner-1952}. В інверсній напівгрупі $S$ вище означений елемент $s^{-1}$ називається \emph{інверсним до} $s$.

Означимо відношення $\preccurlyeq$ на інверсній напівгрупі $S$ так:
$
    s\preccurlyeq t
$
тоді і лише тоді, коли $s=te$, для деякого ідемпотента $e\in S$. Так означений частковий порядок назива\-єть\-ся \emph{при\-род\-ним част\-ковим порядком} на інверсній напівгрупі $S$~\cite{Wagner-1952}. Очевидно, що звуження природного часткового порядку $\preccurlyeq$ на інверсній напівгрупі $S$ на її в'язку $E(S)$ є при\-род\-ним частковим порядком на $E(S)$.

Через $\mathscr{I}(X)$ позначимо множину всіх часткових взаємно однозначних перетворень множини $X$ разом з такою напівгруповою операцією
\begin{equation*}
    x(\alpha\beta)=(x\alpha)\beta, \quad \mbox{якщо} \quad
    x\in\operatorname{dom}(\alpha\beta)=\{
    y\in\operatorname{dom}\alpha\colon
    y\alpha\in\operatorname{dom}\beta\}, \qquad \mbox{для} \quad
    \alpha,\beta\in\mathscr{I}_\lambda.
\end{equation*}
Напівгрупа $\mathscr{I}(X)$ називається  \emph{симетричною інверсною напівгрупою} або \emph{симетричним інверсним моноїдом} над множиною $X$~(див.  \cite{Clifford-Preston-1961}). Симетрична інверсна напівгрупа введена В. В. Вагнером у працях~\cite{Wagner-1952a, Wagner-1952} і вона відіграє важливу роль у теорії напівгруп. Надалі, якщо для $\alpha,\beta\in \mathscr{I}(X)$ виконуються умови $\operatorname{dom}\alpha\subseteq \operatorname{dom}\beta$ i $(x)\beta=(x)\alpha$ для довільного $x\in \operatorname{dom}\alpha$, то будемо писати $\alpha\subseteq \beta$. Для довільного натурального числа $n$ позначимо $\mathscr{I}^n(X)=\{\alpha\in \mathscr{I}(X)\colon \operatorname{rank}\alpha\leq n\}$. Напівгрупа $\mathscr{I}^n(X)$ називається \emph{симетричною інверсною напівгрупою} або \emph{симетричним інверсним моноїдом рангу} $\leq n$ над множиною $X$ \cite{Gutik-Lawson-Repov=2009, Gutik-Reiter=2009, Gutik-Reiter=2010}. Очевидно, що $\mathscr{I}^n(X)$ --- інверсна піднапівгрупа в $\mathscr{I}(X)$, більше того $\mathscr{I}^n(X)$ --- ідеал в $\mathscr{I}(X)$.

Надалі будемо вважати, що $(L,\leqslant)$~--- нескінченна лінійно впорядкована множина. Для елементів $x,y\in L$ умову $x\leqslant y$ i $x\neq y$ записуватимемо так: $x<y$. Елемент $\alpha\in\mathscr{I}(L)$ називається \emph{частковим порядковим iзоморфiзмом}, якщо  для довільних $x_1,x_2\in \operatorname{dom}\alpha$ з $x_1\leqslant x_2$ випливає $(x_1)\alpha\leqslant (x_2)\alpha$. Позаяк $\leqslant$~--- лінійний порядок на $L$, то для $\alpha\in\mathscr{I}(L)$ і для до\-віль\-них $x_1,x_2\in \operatorname{dom}\alpha$ з $(x_1)\alpha\leqslant (x_2)\alpha$ випливає $x_1\leqslant x_2$. Очевидно, що композиція часткових порядкових iзоморфiзмiв лінійно впорядкованої множини $(L,\leqslant)$ є частковим порядковим iзоморфiзмом, і обернене часткове відображення до часткового порядкового iзоморфiзму є знову частковим порядковим iзоморфiзмом. Через $\mathscr{O\!\!I\!}(L)$ позначимо напівгрупу всіх часткових порядкових iзоморфiзмів лінійно впорядкованої множини $(L,\leqslant)$. Очевидно, що $\mathscr{O\!\!I\!}(L)$~--- інверсна напівгрупа симетричного інверсного моноїда $\mathscr{I}(L)$ над множиною $L$.

Для довільного натурального числа $n$ визначимо
\begin{equation*}
  \mathscr{O\!\!I\!}_n(L)=\left\{\alpha\in\mathscr{O\!\!I\!}(L)\colon |\operatorname{dom}\alpha|\leq n\right\}.
\end{equation*}
Очевидно, що $\mathscr{O\!\!I\!}_n(L)$~--- інверсна піднапівгрупа симетричного інверсного моноїда $\mathscr{O\!\!I\!}(L)$ над множиною $L$. Надалі напівгрупу $\mathscr{O\!\!I}\!_n(L)$ будемо називати \emph{напiвгрупою скiнченних часткових порядкових iзоморфiзмiв рангу $\leq n$ лiнiйно впорядкованої  множини} $(L,\leqslant)$ \cite{Gutik-Shchypel=2024}. Далі в тексті, якщо елемент $\alpha$ напівгрупи $\mathscr{O\!\!I}\!_n(L)$ відображає $x_1$ у $y_1$, $\ldots$, $x_k$ у $y_k$, де $x_1,\ldots x_k,y_1,\ldots y_k\in L$, $x_1<\cdots< x_k$, $y_1<\cdots< y_k$, $k\leq n$, то записуватимемо так:
\begin{equation*}
  \alpha=
\begin{pmatrix}
x_1 & \cdots & x_k\\
y_1 & \cdots & y_k
\end{pmatrix}.
\end{equation*}
Порожнє часткове перетворення множини $L$ будемо позначати символом $\boldsymbol{0}$. Очевидно, що $\boldsymbol{0}$ --- нуль напівгрупи $\mathscr{O\!\!I}\!_n(L)$. Очевидно, що $\mathscr{O\!\!I}\!_n(L)$ --- інверсна піднапівгрупа в $\mathscr{I}^n(L)$, а також, що напівгрупи $\mathscr{I}^1(L)$ і $\mathscr{O\!\!I}\!_1(L)$ ізоморфні напівгрупі $L\times L$-матричних одиниць.

У \cite{Gutik-Shchypel=2024} ми досліджували алгебричні властивості напівгрупи $\mathscr{O\!\!I\!}_n(L)$. Зокрема описано її ідемпотенти, природний частковий порядок та відношення Ґріна на $\mathscr{O\!\!I\!}_n(L)$. Доведено, що напівгрупа $\mathscr{O\!\!I\!}_n(L)$ стійка та містить щільний ряд ідеалів, а також, що всі конґруенції на напівгрупі $\mathscr{O\!\!I\!}_n(L)$ є конґруенціями Ріса.

Нагадаємо, що напівгрупа $S$ із заданою на ній топологією $\tau$ називається:
\begin{itemize}
  \item \emph{ліво-топологічною}, якщо праві внутрішні зсуви  в $(S,\tau)$ неперервні \cite{Ruppert=1984};
  \item \emph{право-топологічною}, якщо ліві внутрішні зсуви  в $(S,\tau)$ неперервні \cite{Ruppert=1984};
  \item \emph{напівтопологічною}, якщо напівгрупова операція на $(S,\tau)$ нарізно неперервна \cite{Ruppert=1984};
  \item \emph{топологічною}, якщо напівгрупова операція на $(S,\tau)$  неперервна \cite{Carruth-Hildebrant-Koch=1983}.%;
  %\item \emph{топологічною інверсною}, якщо $S$ --- інверсна напівгрупа, напівгрупова операція та інверсія на $(S,\tau)$  неперервні відображення \cite{Carruth-Hildebrant-Koch=1983}.
\end{itemize}
Топологія $\tau$ на напівгрупі $S$ називається:
\begin{itemize}
  \item \emph{ліво-неперервною}, якщо $(S,\tau)$ --- ліво-топологічна напівгрупа;
  \item \emph{право-неперервною}, якщо $(S,\tau)$ --- право-топологічна напівгрупа;
  \item \emph{трансляційно-неперервною}, якщо $(S,\tau)$ --- напівтопологічна напівгрупа;
  \item \emph{напівгруповою}, якщо $(S,\tau)$ --- топологічна напівгрупа.%;
  %\item \emph{напівгруповою інверсною}, якщо $(S,\tau)$ --- топологічна інверсна напівгрупа.
\end{itemize}

У цій праці  досліджуємо топологізацію напівгрупи $\mathscr{O\!\!I}\!_n(L)$. Зокрема, доведено, що кожна $T_1$ ліво-топологічна (право-топологічна) напівгрупа $\mathscr{O\!\!I}\!_n(L)$ є цілком гаусдорфовим, урисонівським, цілком відокремлюваним, розрідженим простором. Доведено, що на напівгрупі $\mathscr{O\!\!I}\!_n(L)$ існує єдина гаусдорфова зліченно компактна (псевдокомпактна) трансляційно-неперервна топологія, яка є компактною.

%%%%%%%%%%%%%%%%%%%%%%%%%%%%%%%%%%%%%%%%%%%%%%%%%%%%%%%%%%%%%%%%%%%%%%%%%%%%%%%%%%
\section{Топологізація напівгрупи $\mathscr{O\!\!I}\!_n(L)$}\label{section-2}

Нехай $\alpha$ --- довільний елемент напівгрупи $\mathscr{O\!\!I}\!_n(L)$. Позначимо
\begin{equation*}
  {\uparrow}_{\preccurlyeq_l}\alpha=\left\{\beta\in\mathscr{O\!\!I}\!_n(L)\colon \alpha\alpha^{-1}\beta=\alpha\right\} \qquad \hbox{i} \qquad
  {\uparrow}_{\preccurlyeq_r}\alpha=\left\{\beta\in\mathscr{O\!\!I}\!_n(L)\colon \alpha=\beta\alpha^{-1}\alpha\right\}.
\end{equation*}

З леми 1.4.6 \cite{Lawson-1998} випливає

\begin{lemma}\label{lemma-2.1}
Для довільного елемента $\alpha$ напівгрупи $\mathscr{O\!\!I}\!_n(L)$ виконується рівність
\begin{equation*}
  {\uparrow}_{\preccurlyeq}\alpha={\uparrow}_{\preccurlyeq_l}\alpha={\uparrow}_{\preccurlyeq_r}\alpha.
\end{equation*}
\end{lemma}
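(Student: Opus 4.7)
The plan is to recognize the statement as an immediate consequence of the standard equivalent descriptions of the natural partial order in an inverse semigroup, i.e.\ Lemma~1.4.6 of~\cite{Lawson-1998}. Since the introduction already observes that $\mathscr{O\!\!I}\!_n(L)$ is an inverse subsemigroup of the symmetric inverse semigroup $\mathscr{I}(L)$, and therefore is itself an inverse semigroup, that result applies verbatim.

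The first step is to recall the relevant equivalences in an arbitrary inverse semigroup $S$: for $s,t\in S$ the three conditions
\[
s\preccurlyeq t,\qquad s=ss^{-1}t,\qquad s=ts^{-1}s
\]
are pairwise equivalent. I would then simply substitute $s=\alpha$ and $t=\beta$; the resulting three conditions are literally the defining conditions of $\beta\in{\uparrow}_{\preccurlyeq}\alpha$, $\beta\in{\uparrow}_{\preccurlyeq_l}\alpha$, and $\beta\in{\uparrow}_{\preccurlyeq_r}\alpha$, respectively, so the three sets coincide.

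There is essentially no genuine obstacle here. The only mild point to verify is that the inverse $\alpha^{-1}$ appearing in the defining formulas does belong to $\mathscr{O\!\!I}\!_n(L)$, so that the products $\alpha\alpha^{-1}\beta$ and $\beta\alpha^{-1}\alpha$ are well-defined inside the semigroup; but this follows at once from the fact that $\mathscr{O\!\!I}\!_n(L)$ is closed under the inversion operation inherited from $\mathscr{I}(L)$, which is precisely what being an inverse subsemigroup means. Thus the lemma is a direct translation of Lawson's theorem into the notation of the present paper, and the proof reduces to citing~\cite{Lawson-1998}.
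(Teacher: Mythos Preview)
Your proposal is correct and coincides with the paper's own approach: the paper does not give a separate proof of this lemma but simply records it as an immediate consequence of Lemma~1.4.6 in~\cite{Lawson-1998}, exactly as you do. Your additional remark that $\alpha^{-1}\in\mathscr{O\!\!I}\!_n(L)$ because $\mathscr{O\!\!I}\!_n(L)$ is an inverse subsemigroup is a harmless clarification that the paper leaves implicit.
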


\begin{lemma}\label{lemma-2.2}
Для довільного елемента $\alpha$ напівгрупи $\mathscr{O\!\!I}\!_n(L)$ множина ${\downarrow}_{\preccurlyeq}\alpha$ скінченна.
\end{lemma}

\begin{proof}
Зафіксуємо довільний елемент $\alpha$ напівгрупи $\mathscr{O\!\!I}\!_n(L)$. Позаяк $\alpha$~--- скінченний частковий порядковий ізоморфізм лінійно впорядкованої множини $L$, то ототожнивши часткову бієкцію $\alpha$ з її графіком $\operatorname{graph}\alpha$ отримуємо, що $\alpha$ --- скінченна підмножина декартового добутку $L\times L$. За твердженням 1(2) з \cite{Gutik-Shchypel=2024}, $\beta\in {\downarrow}_{\preccurlyeq}\alpha$ тоді і лише тоді, коли $\beta\subseteq \alpha$, а отже, множина ${\downarrow}_{\preccurlyeq}\alpha$ скінченна.
\end{proof}

\begin{proposition}\label{proposition-2.3}
Нехай $\tau$~--- ліво-неперервна (право-неперервна) $T_1$-топологія на напівгрупі $\mathscr{O\!\!I}\!_n(L)$. Тоді для довільного елемента $\alpha$ напівгрупи $\mathscr{O\!\!I}\!_n(L)$ існує його відкритий окіл $U(\alpha)$ в $(\mathscr{O\!\!I}\!_n(L),\tau)$ такий, що $U(\alpha)\subseteq {\uparrow}_{\preccurlyeq}\alpha$.
\end{proposition}

\begin{proof}
Зафіксуємо довільний відкритий окіл $W(\alpha)$ елемента $\alpha$ в топологічному просторі $(\mathscr{O\!\!I}\!_n(L),\tau)$. Позаяк $\tau$~--- $T_1$-топологія на $\mathscr{O\!\!I}\!_n(L)$, то усі одноточкові підмножини в $(\mathscr{O\!\!I}\!_n(L),\tau)$ замкнені, а отже, за лемою~\ref{lemma-2.2}, не зменшуючи загальності, можемо вважати, що $W(\alpha)\cap {\downarrow}_{\preccurlyeq}^\circ\alpha=\varnothing$. З неперервності правих зсувів у $(\mathscr{O\!\!I}\!_n(L),\tau)$ випливає, що існує відкритий окіл $U(\alpha)$ елемента $\alpha$ в  $(\mathscr{O\!\!I}\!_n(L),\tau)$ такий, що $U(\alpha)\cdot \alpha^{-1}\alpha\subseteq W(\alpha)$. Якщо $U(\alpha)\nsubseteq {\uparrow}_{\preccurlyeq}\alpha$, то з властивостей напівгрупової операції в $\mathscr{O\!\!I}\!_n(L)$ та леми \ref{lemma-2.1} випливає, що існує елемент $\chi\in U(\alpha)$ такий, що $\chi\cdot \alpha^{-1}\alpha\neq\alpha$, тобто $\chi\cdot \alpha^{-1}\alpha\in {\downarrow}_{\preccurlyeq}^\circ\alpha$, а це суперечить вибору окола $W(\alpha)$. З отриманого протиріччя випливає твердження.

У випадку неперервності лівих зсувів доведення аналогічне.
\end{proof}

\begin{lemma}\label{lemma-2.4}
Нехай $\tau$~--- ліво-неперервна (право-неперервна) $T_1$-топологія на напівгрупі $\mathscr{O\!\!I}\!_n(L)$. Тоді ${\uparrow}_{\preccurlyeq}\alpha$ --- відкрито-замкнена підмножина в $(\mathscr{O\!\!I}\!_n(L),\tau)$ для довільного елемента $\alpha$ напівгрупи $\mathscr{O\!\!I}\!_n(L)$.
\end{lemma}

\begin{proof}
З леми \ref{lemma-2.2} випливає, що ${\uparrow}_{\preccurlyeq}\alpha$ --- відкрита підмножина в $(\mathscr{O\!\!I}\!_n(L),\tau)$ для довільного елемента $\alpha$ напівгрупи $\mathscr{O\!\!I}\!_n(L)$. Позаяк $\tau$~--- $T_1$-топологія на $\mathscr{O\!\!I}\!_n(L)$, то кожна точка $\alpha$ у тополо\-гіч\-ному просторі $\mathscr{O\!\!I}\!_n(L)$ є замкненою множиною. Тоді ${\uparrow}_{\preccurlyeq}\alpha$ --- замкнена підмножина в $(\mathscr{O\!\!I}\!_n(L),\tau)$ як повний прообраз точки $\alpha$ стосовно неперервного правого (лівого) зсуву $\chi\mapsto \chi\cdot\alpha^{-1}\alpha$ ($\chi\mapsto \alpha\alpha^{-1}\cdot\chi$) в $(\mathscr{O\!\!I}\!_n(L),\tau)$.
\end{proof}

Нагадаємо, що топологічний простір $X$ називається:
\begin{itemize}
  \item \emph{урисонівським}, якщо для довільної пари різних точок $x_1,x_2\in X$ існує неперервне відображення $f\colon[0,1]\to X$ таке, що $(x_1)f=0$ i $(x_2)f=1$ \cite{Steen-Seebach=1995};
  \item \emph{цілком гаусдорфовим}, якщо для довільної пари різних точок $x_1,x_2\in X$ існують їхні відкриті околи $U(x_1)$ i $U(x_2)$ такі, що $\operatorname{cl}_X(U(x_1))\cap \operatorname{cl}_X(U(x_2))=\varnothing$ \cite{Urysohn=1925};
  \item \emph{цілком відокремлюваним}, якщо для довільної пари різних точок $x_1,x_2\in X$ існують їхні диз'юнктні відкриті околи $U(x_1)$ i $U(x_2)$ такі, що $U(x_1)\cup U(x_2)=X$ \cite{Steen-Seebach=1995};
  \item \emph{цілком незв'язним}, якщо кожна компонента зв'язності в $X$ є одноелеметною \cite{Engelking=1989};
  \item \emph{розрідженим}, якщо кожна непорожна підмножина $A$ в $X$ містить ізольовану точку в $A$ \cite{Engelking=1989}.
\end{itemize}

Зауважимо, що кожний цілком відокремлюваний простір є цілком незв'язним, урисонівським і цілком гаусдорфовим \cite{Steen-Seebach=1995}.

З леми~\ref{lemma-2.4} випливає

\begin{proposition}\label{proposition-2.5}
Нехай $\tau$~--- ліво-неперервна (право-неперервна) $T_1$-топологія на напівгрупі $\mathscr{O\!\!I}\!_n(L)$. Тоді $(\mathscr{O\!\!I}\!_n(L),\tau)$ --- цілком відокремлюваний простір.
\end{proposition}

\begin{proposition}\label{proposition-2.6}
Якщо $\tau$~--- ліво-неперервна (право-неперервна) $T_1$-топологія на напівгрупі $\mathscr{O\!\!I}\!_n(L)$, то $(\mathscr{O\!\!I}\!_n(L),\tau)$ --- розріджений простір.
\end{proposition}

\begin{proof}
Нехай $A$~--- довільна непорожна підмножина в $(\mathscr{O\!\!I}\!_n(L),\tau)$. Позаяк кожен ранг кожного елемента напівгрупи $\mathscr{O\!\!I}\!_n(L)$ обмежений натуральним числом, то існує елемент $\alpha$ в $\mathscr{O\!\!I}\!_n(L)$ такий, що $A\cap {\uparrow}_{\preccurlyeq}\alpha=\{\alpha\}$. Далі скористаємося твердженням \ref{proposition-2.3}.
\end{proof}

\begin{lemma}\label{lemma-2.7}
Нехай $L$~--- лінійно впорядкована множина. Тоді ${\uparrow}_{\preccurlyeq}\alpha\subseteq \mathscr{I}^n(L)\setminus \mathscr{O\!\!I}\!_n(L)$ в $\mathscr{I}^n(L)$ для довільного $\alpha\in\mathscr{I}^n(L)\setminus \mathscr{O\!\!I}\!_n(L)$.
\end{lemma}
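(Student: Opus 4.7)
The plan is to reduce everything to the description of $\preccurlyeq$ by graph inclusion and then exploit the fact that any failure of monotonicity witnessed inside $\alpha$ is inherited by every extension of $\alpha$.

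First I would note, exactly as recalled in the proof of Lemma~\ref{lemma-2.2}, that for $\alpha,\beta\in\mathscr{I}^n(L)$ the relation $\alpha\preccurlyeq\beta$ is equivalent to the graph inclusion $\alpha\subseteq\beta$ (Proposition~1(2) of \cite{Gutik-Shchypel=2024}). Consequently, the up-set in $\mathscr{I}^n(L)$ is just the set of all extensions:
\begin{equation*}
  {\uparrow}_{\preccurlyeq}\alpha=\{\beta\in\mathscr{I}^n(L)\colon\alpha\subseteq\beta\}.
\end{equation*}

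Next, fix $\alpha\in\mathscr{I}^n(L)\setminus\mathscr{O\!\!I}\!_n(L)$. Since $\alpha$ is a partial injection which is not order-preserving, there exist $x_1,x_2\in\operatorname{dom}\alpha$ with $x_1\leqslant x_2$ and $(x_1)\alpha\not\leqslant(x_2)\alpha$. Injectivity of $\alpha$ forces $(x_1)\alpha\neq(x_2)\alpha$, hence $x_1\neq x_2$ and so $x_1<x_2$; linearity of $(L,\leqslant)$ then upgrades the failure to a strict reversal $(x_2)\alpha<(x_1)\alpha$. The pair $(x_1,x_2)$ is therefore a concrete witness, lying in $\operatorname{dom}\alpha$, that $\alpha$ reverses order.

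Finally, for any $\beta\in{\uparrow}_{\preccurlyeq}\alpha$ we have $\alpha\subseteq\beta$, so in particular $x_1,x_2\in\operatorname{dom}\beta$ with $(x_i)\beta=(x_i)\alpha$ for $i=1,2$. The same witness pair then shows $\beta$ is not order-preserving, i.e.\ $\beta\in\mathscr{I}^n(L)\setminus\mathscr{O\!\!I}\!_n(L)$, which establishes the claimed inclusion. There is no real obstacle in this argument; the only substantive point is the use of linearity of $L$ together with injectivity of $\alpha$ to convert the mere negation $(x_1)\alpha\not\leqslant(x_2)\alpha$ into the strict inequality $(x_2)\alpha<(x_1)\alpha$, so that the order-reversing behaviour is preserved under any enlargement of the domain.
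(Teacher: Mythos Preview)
Your proof is correct and follows essentially the same route as the paper: pick a witnessing pair $x_1<x_2$ with $(x_2)\alpha<(x_1)\alpha$, use that $\preccurlyeq$ on $\mathscr{I}^n(L)$ is graph inclusion, and observe that any $\beta\supseteq\alpha$ inherits the same witness. The only cosmetic difference is that you justify the graph-inclusion description via Proposition~1(2) of \cite{Gutik-Shchypel=2024} (stated there for $\mathscr{O\!\!I}\!_n(L)$), whereas the paper cites \cite{Gutik-Reiter=2009, Gutik-Reiter=2010} for the natural partial order on $\mathscr{I}^n(L)$; either reference works, since this description is standard in any symmetric inverse monoid.
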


\begin{proof}
Зафіксуємо довільний елемент $\alpha\in\mathscr{I}^n(L)\setminus \mathscr{O\!\!I}\!_n(L)$. З означень напівгруп $\mathscr{I}^n(L)$ i $\mathscr{O\!\!I}\!_n(L)$ випливає, що $\operatorname{rank}\alpha\geq 2$. Тоді існують такі $x_1,x_2\in \operatorname{dom}\alpha$, що $x_1\leqslant x_2$ i $(x_2)\alpha\leqslant (x_1)\alpha$. З означення природного часткового порядку на $\mathscr{I}^n(L)$ (див. \cite{Gutik-Reiter=2009, Gutik-Reiter=2010}) випливає, що для довільного $\beta\in{\uparrow}_{\preccurlyeq}\alpha$ виконуються умови $x_1,x_2\in \operatorname{dom}\beta$ і $(x_2)\alpha,(x_1)\alpha\in \operatorname{ran}\beta$. Позаяк $(x_2)\alpha\leqslant (x_1)\alpha$, то $\beta\notin \mathscr{O\!\!I}\!_n(L)$, звідки випливає твердження леми.
\end{proof}

Позаяк твердження лем~\ref{lemma-2.1}, \ref{lemma-2.2} i \ref{lemma-2.4} виконуються і для напівгрупи $\mathscr{I}^n(L)$ (див. \cite{Gutik-Reiter=2010}), то з леми \ref{lemma-2.7} випливає

\begin{proposition}\label{proposition-2.8}
Нехай $\tau$~--- ліво-неперервна (право-неперервна) $T_1$-топологія на напівгрупі $\mathscr{I}^n(L)$. Тоді $\mathscr{O\!\!I}\!_n(L)$ --- замкнена піднапівгрупа в $(\mathscr{I}^n(L),\tau)$.
\end{proposition}

Надалі під \emph{топологізовною напівгрупою} будемо розуміти напівгрупу із заданою на ній топологією.

\begin{definition}[\cite{Gutik-Pavlyk=2003, Stepp=1975}]\label{definition-2.9}
Нехай $\mathfrak{S}$~--- клас топологізовних напівгруп. Будемо говорити, що (алгеб\-рич\-на) напівгрупа $S$ \emph{алгебрично h-замкнена в класі} $\mathfrak{S}$, якщо $S$ з дискретною топологією є елементом класу $\mathfrak{S}$ і для довільних топологізовної напівгрупи $T\in\mathfrak{S}$ і гомоморфізму $\mathfrak{h}\colon S\to T$ образ $(S)\mathfrak{h}$ є замкненою піднапівгрупою в $T$.
\end{definition}

Нагадаємо \cite{Gutik-Lawson-Repov=2009}, що нескінченна підмножина $D$ напівгрупи $S$ називається \emph{$\omega$-нестій\-кою}, якщо $sB\cup Bs\nsubseteq D$ для довільних $s\in D$ і нескінченої підмножини $B$ у $D$. Будемо говорити, що \emph{напівгрупа $S$ має щільний ряд ідеалів} $J_0\subseteq J_1\subseteq\cdots\subseteq J_m$, $m\in\mathbb{N}$, якщо $J_0$~--- скінченний ідеал в $S$ i $J_k\setminus J_{k-1}$~--- $\omega$-нестійка підмножина в $S$ для довільного $k=1,\ldots,m$.

\begin{theorem}\label{theorem-2.10}
Нехай $(L,\leqslant)$~--- нескінченна лінійно впорядкована множина, $n$ --- довільне натуральне число. Тоді напівгрупа $\mathscr{O\!\!I}\!_n(L)$ алгебрично h-замкнена в класі гаусдорфових напівтопологічних інверсних напівгруп з неперервною інверсією.
\end{theorem}

\begin{proof}
Нехай $T$~--- довільна гаусдорфова напівтопологічна інверсна напівгрупа з неперервною інверсією і $\mathfrak{h}\colon \mathscr{O\!\!I}\!_n(L)\to T$~--- гомоморфізм. Якщо $\mathfrak{h}$~--- анулюючий гомоморфізм, то доводити нічого, оскільки кожна точка в гаусдорфому топологічному просторі $T$ є замкненою множиною. Тому надалі вважатимемо, що $\mathfrak{h}\colon \mathscr{O\!\!I}\!_n(L)\to T$~--- неанулюючий гомоморфізм. За теоремою 3 з \cite{Gutik-Shchypel=2024} гомоморфний образ $(\mathscr{O\!\!I}\!_n(L))\mathfrak{h}$ є піднапівгрупою в $T$, яка має щільний ряд ідеалів, а отже, за тверджен\-ням~10 з \cite{Gutik-Lawson-Repov=2009} $(\mathscr{O\!\!I}\!_n(L))\mathfrak{h}$~--- замкнена піднапівгрупа в $T$.
\end{proof}

\begin{proposition}\label{proposition-2.11}
Нехай $\tau$~--- ліво-неперервна (право-неперервна) $T_1$-топологія на напівгрупі $\mathscr{O\!\!I}\!_n(L)$. Тоді природний частковий порядок $\preccurlyeq$ є замкненим відношенням  на $\mathscr{O\!\!I}\!_n(L)$.
\end{proposition}

\begin{proof}
Зафіксуємо два довільні різні елементи $\alpha$ i $\beta$ напівгрупи $\mathscr{O\!\!I}\!_n(L)$, які непорівняльні стосовно її природного часткового порядку $\preccurlyeq$. Тоді $\alpha\notin{\uparrow}_{\preccurlyeq}\beta$ i $\beta\notin {\uparrow}_{\preccurlyeq}\alpha$. Приймемо $U(\alpha)={\uparrow}_{\preccurlyeq}\alpha\setminus {\uparrow}_{\preccurlyeq}\beta$ i $U(\beta)={\uparrow}_{\preccurlyeq}\beta\setminus {\uparrow}_{\preccurlyeq}\alpha$. З леми~\ref{lemma-2.4} випливає, що $U(\alpha)$ i $U(\beta)$~--- відкрито-замкнені околи точок $\alpha$ i $\beta$ в топологічному просторі $(\mathscr{O\!\!I}\!_n(L),\tau)$, відповідно. За побудовою множини $U(\alpha)$ i $U(\beta)$ маємо, що $U(\alpha)\cap U(\beta)=\varnothing$ i для довільних $\gamma\in U(\alpha)$ i $\delta\in U(\beta)$ елементи $\gamma$ i $\delta$ непорівняльні стосовно природного часткового порядку $\preccurlyeq$ на напівгрупі $\mathscr{O\!\!I}\!_n(L)$. Отже, $(U(\alpha)\times U(\beta))\cap \preccurlyeq=\varnothing$, звідки випливає, що природний частковий порядок $\preccurlyeq$ є замкненим відношенням на  $\mathscr{O\!\!I}\!_n(L)\times \mathscr{O\!\!I}\!_n(L)$ з топологією добутку.
\end{proof}

\section{Про компактні топології на напівгрупі $\mathscr{O\!\!I}\!_n(L)$}

Нагадаємо \cite{Engelking=1989}, що топологічний простір $X$ називається:
\begin{itemize}
  \item \emph{компактним}, якщо довільне відкрите покриття простору $X$ містить скінченне підпокриття;
  \item \emph{зліченно компактним}, якщо довільне зліченне відкрите покриття простору $X$ містить скінченне підпокриття;
  \item \emph{секвенціально компактним}, якщо $X$ --- гаусдорфовий і кожна послідовність точок в $X$ містить збіжну підпослідовність;
  \item \emph{псевдокомпактним}, якщо $X$ --- цілком регулярний і кожна неперервна дійсно визначена функція на $X$ є обмеженою.
\end{itemize}

За теоремою 5 з \cite{Gutik-Pavlyk-Reiter=2009} нескінченна напівгрупа матричних одиниць не занурюється ізоморфно в довільну гаусдорфову зліченно компактну топологічну напівгрупу. Звідси випливає, що аналогічне твердження справджується для напівгрупи $\mathscr{O\!\!I}\!_n(L)$ для довільної нескінченної лінійно впорядкованої множини $(L,\leqslant)$ і довільного натурального числа  $n$.

За твердженням 10 з \cite{Gutik-Reiter=2010} топологія $\tau_\mathrm{c}$ на напівгрупі $\mathscr{I}^n(L)$, породжена базою
\begin{equation*}\label{eq-3.1}
  \mathscr{B}=\left\{U_\alpha(\alpha_1,\ldots,\alpha_k)={\uparrow}_{\preccurlyeq}\alpha\setminus({\uparrow}_{\preccurlyeq}\alpha_1\cup\cdots\cup {\uparrow}_{\preccurlyeq}\alpha_k)\colon \alpha_i\in{\uparrow}_{\preccurlyeq}\alpha,\, \alpha,\alpha_i\in \mathscr{I}^n(L), i=1,\ldots,k\right\},
\end{equation*}
гаусдорфова, компактна, трансляційно-неперервна і, крім того, інверсія стосовно неї також є неперервною. Також через $\tau_\mathrm{c}$ позначимо топологію на напівгрупі $\mathscr{O\!\!I}\!_n(L)$, індуковану з топологічного простору $(\mathscr{I}^n(L),\tau_\mathrm{c})$. Очевидно, що топологія $\tau_\mathrm{c}$ на напівгрупі $\mathscr{O\!\!I}\!_n(L)$ породжується базою
\begin{equation*}\label{eq-3.2}
  \mathscr{B}_\mathrm{c}=\left\{U_\alpha(\alpha_1,\ldots,\alpha_k)={\uparrow}_{\preccurlyeq}\alpha\setminus({\uparrow}_{\preccurlyeq}\alpha_1\cup\cdots\cup {\uparrow}_{\preccurlyeq}\alpha_k)\colon \alpha_i\in{\uparrow}_{\preccurlyeq}\alpha,\, \alpha,\alpha_i\in \mathscr{O\!\!I}\!_n(L), i=1,\ldots,k\right\}.
\end{equation*}

З твердженням 10 \cite{Gutik-Reiter=2010} і твердження \ref{proposition-2.8} випливає

\begin{proposition}\label{proposition-3.1}
Для довільного натурального числа $n$, $(\mathscr{O\!\!I}\!_n(L),\tau_\mathrm{c})$ --- гаусдорфова компактна напівтопологічна напівгрупа з неперервною інверсією.
\end{proposition}

\begin{remark}\label{remark-3.2}
З леми \ref{lemma-2.4} та означення топології $\tau_\mathrm{c}$ на напівгрупі $\mathscr{O\!\!I}\!_n(L)$ випливає, що для довільної ліво-неперервної (право-неперервної) $T_1$-топології $\tau$ на $\mathscr{O\!\!I}\!_n(L)$ виконується включення $\tau_\mathrm{c}\subseteq\tau$.
\end{remark}

\begin{theorem}\label{theorem-3.3}
Нехай $(L,\leqslant)$~--- нескінченна лінійно впорядкована множина, $n$ --- довільне натуральне число і $\tau$~--- гаусдофрова топологія на напівгрупі $\mathscr{O\!\!I}\!_n(L)$. Тоді такі умови еквівалентні:
\begin{enumerate}
  \item[$(1)$] $(\mathscr{O\!\!I}\!_n(L),\tau)$~--- компактна напівтопологічна напівгрупа;
  \item[$(2)$] $(\mathscr{O\!\!I}\!_n(L),\tau)$ топологічно ізоморфна $(\mathscr{O\!\!I}\!_n(L),\tau_{\mathrm{c}})$;
  \item[$(3)$] $(\mathscr{O\!\!I}\!_n(L),\tau)$~--- компактна напівтопологічна напівгрупа з неперервною інверсією;
  \item[$(4)$] $(\mathscr{O\!\!I}\!_n(L),\tau)$~--- зліченно компактна напівтопологічна напівгрупа;
  \item[$(5)$] $(\mathscr{O\!\!I}\!_n(L),\tau)$~--- секвенціально компактна напівтопологічна напівгрупа;
  \item[$(6)$] $(\mathscr{O\!\!I}\!_n(L),\tau)$~--- псевдокомпактна напівтопологічна напівгрупа;
  \item[$(7)$] $(\mathscr{O\!\!I}\!_n(L),\tau)$~--- компактна ліво-топологічна напівгрупа;
  \item[$(8)$] $(\mathscr{O\!\!I}\!_n(L),\tau)$~--- компактна право-топологічна напівгрупа;
  \item[$(9)$] $(\mathscr{O\!\!I}\!_n(L),\tau)$~--- зліченно компактна ліво-топологічна напівгрупа;
  \item[$(10)$] $(\mathscr{O\!\!I}\!_n(L),\tau)$~--- зліченно компактна право-топологічна напівгрупа;
  \item[$(11)$] $(\mathscr{O\!\!I}\!_n(L),\tau)$~--- секвенціально компактна ліво-топологічна напівгрупа;
  \item[$(12)$] $(\mathscr{O\!\!I}\!_n(L),\tau)$~--- секвенціально компактна право-топологічна напівгрупа;
  \item[$(13)$] $(\mathscr{O\!\!I}\!_n(L),\tau)$~--- псевдокомпактна ліво-топологічна напівгрупа;
  \item[$(14)$] $(\mathscr{O\!\!I}\!_n(L),\tau)$~--- псевдокомпактна право-топологічна напівгрупа.
\end{enumerate}
\end{theorem}

\begin{proof}
Імплікації  $(1)\Rightarrow(7)$, $(1)\Rightarrow(8)$, $(2)\Rightarrow(1)$, $(2)\Rightarrow(3)$, $(3)\Rightarrow(1)$, $(4)\Rightarrow(9)$, $(4)\Rightarrow(10)$, $(5)\Rightarrow(11)$. $(5)\Rightarrow(12)$, $(6)\Rightarrow(13)$, $(6)\Rightarrow(14)$, $(11)\Rightarrow(9)$, $(12)\Rightarrow(10)$ очевидні. Імплікації $(1)\Rightarrow(4)$, $(2)\Rightarrow(4)$, $(1)\Rightarrow(6)$ і $(5)\Rightarrow(4)$ випливають з властивостей топологічних просторів (див. \cite[глава 3]{Engelking=1989}).

$(7)\Rightarrow(2)$
Із зауваження \ref{remark-3.2} випливає, що $\tau_\mathrm{c}\subseteq\tau$, оскільки $\tau$ --- гаусдорфова компактна ліво-неперервна топологія на напівгрупі $\mathscr{O\!\!I}\!_n(L)$. Тоді скориставшись наслідком 3.1.14 з \cite{Engelking=1989}, отримуємо, що $\tau=\tau_\mathrm{c}$.
Доведення імплікації $(8)\Rightarrow(2)$ аналогічне.

$(9)\Rightarrow(7)$
Нехай $\tau$~--- гаусдорфова зліченно компактна діво-неперервна топологія на напівгрупі $\mathscr{O\!\!I}\!_n(L)$. Доведення проведемо індукцією по $n$. Очевидно, що напівгрупа $\mathscr{O\!\!I}\!_1(L)$ ізо\-морфна напівгрупі $L\times L$-матричних одиниць. За лемою~\ref{lemma-2.4} ${\uparrow}_{\preccurlyeq}\alpha=\{\alpha\}$ --- відкрито-замкнена підмножина в $(\mathscr{O\!\!I}\!_1(L),\tau)$ для довільного ненульового елемента $\alpha$ напівгрупи $\mathscr{O\!\!I}\!_1(L)$, а отже, з теореми 3.10.3 \cite{Engelking=1989} випливає, що кожен відкритий окіл нуля в $(\mathscr{O\!\!I}\!_1(L),\tau)$ має скінченне доповнення. Тоді за теоремою 2 \cite{Gutik-Pavlyk=2005} ліво-топологічна напівгрупа $(\mathscr{O\!\!I}\!_1(L),\tau)$ є компактною напівтопологічною напівгрупою.

Далі доведемо, що з того, що для довільного натурального числа $k<n$ зліченно компактна ліво-топологічна напівгрупа $(\mathscr{O\!\!I}\!_k(L),\tau)$ є компактною, випливає, що зліченно компактна напівгрупа $(\mathscr{O\!\!I}\!_n(L),\tau)$ також є компактною. З леми \ref{lemma-2.4} випливає, що $\mathscr{O\!\!I}\!_{n-1}(L)$~--- замкнена піднапівгрупа в $(\mathscr{O\!\!I}\!_n(L),\tau)$, а тоді за теоремою 3.10.4 \cite{Engelking=1989} маємо, що $\mathscr{O\!\!I}\!_{n-1}(L)$~--- зліченно компактний підпростір у $(\mathscr{O\!\!I}\!_n(L),\tau)$. Скориставшись припущенням індукції, отримуємо, що піднапівгрупа $\mathscr{O\!\!I}\!_{n-1}(L)$ в $(\mathscr{O\!\!I}\!_n(L),\tau)$ компактна. Припустимо, що простір $(\mathscr{O\!\!I}\!_n(L),\tau)$ не є компактним. Тоді існує відкрите покриття $\mathscr{U}$ простору $(\mathscr{O\!\!I}\!_n(L),\tau)$, що не містить скінченне підпокриття. З компактності підпростору $\mathscr{O\!\!I}\!_{n-1}(L)$ в $(\mathscr{O\!\!I}\!_n(L),\tau)$ випливає існування скінченної підсім'ї $\mathscr{U}_0$ в $\mathscr{U}$, яка є покриттям простору $\mathscr{O\!\!I}\!_{n-1}(L)$. Тоді $\bigcup\mathscr{U}_0\supseteq \mathscr{O\!\!I}\!_{n-1}(L)$, однак за припущенням множина $\mathscr{O\!\!I}\!_{n}(L)\setminus \bigcup\mathscr{U}_0$ нескінченна та замкнена в $(\mathscr{O\!\!I}\!_n(L),\tau)$. За лемою \ref{lemma-2.4} усі точки множини $\mathscr{O\!\!I}\!_n(L)\setminus\mathscr{O\!\!I}\!_{n-1}(L)$, а отже, і множини $\mathscr{O\!\!I}\!_{n}(L)\setminus \bigcup\mathscr{U}_0$, є ізольованими в $(\mathscr{O\!\!I}\!_n(L),\tau)$. Отже, топологічний простір $(\mathscr{O\!\!I}\!_n(L),\tau)$ містить нескінченну зліченну замкнену множину ізольованих точок, а це суперечить тому, що він є зліченно компактним (див. теорему 3.10.3 в \cite{Engelking=1989}). З отриманого протиріччя випливає наша імплікація. Доведення імплікацій $(4)\Rightarrow(1)$ і $(10)\Rightarrow(8)$ аналогічні.

$(1)\Rightarrow(5)$
За твердженням \ref{proposition-2.6} топологічний простір $(\mathscr{O\!\!I}\!_n(L),\tau)$ є розрідженим.
Позаяк кожен зліченно компактний розріджений $T_3$-простір, а отже, і гаусдорфовий компактний розріджений простір, є секвенціально компактним (див. \cite[теорема 5.7]{Vaughan=1984}), то $(\mathscr{O\!\!I}\!_n(L),\tau)$ є секвенціально компактним прос\-тором. Доведення імплікацій $(7)\Rightarrow(11)$ і $(8)\Rightarrow(12)$ аналогічні.

$(13)\Rightarrow(7)$
Нехай $\tau$~--- псевдокомпактна ліво-неперервна топологія на напівгрупі $\mathscr{O\!\!I}\!_n(L)$.
Провівши аналогічні міркування доведення індукцією по $n$, як і в доведенні імплікації $(9)\Rightarrow(7)$, отримуємо, що  топологіч\-ний простір $(\mathscr{O\!\!I}\!_n(L),\tau)$ містить нескінченну зліченну замкнену множину ізольованих точок $\left\{\alpha_i\colon i\in\omega\right\}=\mathscr{O\!\!I}\!_n(L)\setminus\bigcup\mathscr{U}_0$. Означимо відображення $\mathfrak{f}\colon (\mathscr{O\!\!I}\!_n(L),\tau)\to \mathbb{R}$ у множину дійсних чисел $\mathbb{R}$ зі звичайною топологією за формулою
\begin{equation*}
  (\alpha)\mathfrak{f}=
  \left\{
    \begin{array}{cl}
      -1, & \hbox{якщо~} \alpha\in\bigcup\mathscr{U}_0;\\
      i,  & \hbox{якщо~} \alpha=\alpha_i \hbox{~для деякого~} i\in\omega.
    \end{array}
  \right.
\end{equation*}
Очевидно, що так означене відображення $\mathfrak{f}$ неперервне та необмежене, а це суперечить псевдокомпактності простору $(\mathscr{O\!\!I}\!_n(L),\tau)$. З отриманого протиріччя випливає наша імплікація. Доведення імплікацій $(6)\Rightarrow(1)$ і $(14)\Rightarrow(8)$ аналогічні.
\end{proof}

З теореми \ref{theorem-3.3} та теореми 1 \cite{Gutik-Reiter=2010} випливає такий наслідок.

\begin{corollary}\label{corollary-3.4}
Нехай $(L,\leqslant)$~--- нескінченна лінійно впорядкована множина, $n$ --- довільне натуральне число.
На напівгрупі $\mathscr{O\!\!I}\!_n(L)$ існує єдина гаусдорфова компактна трансляційно-неперервна топологія, яка єдиним чином продожується до гаусдорфової компактної трансляційно-неперервної топології на напівгрупі $\mathscr{I}^n(L)$.
\end{corollary}

\begin{remark}
Зауважимо, що еквівалентність тверджень $(1)-(14)$ теореми~\ref{theorem-3.3} для напівгрупи $\mathscr{I}^n(L)$ доводиться аналогічно.
\end{remark}

Нагадаємо \cite{Engelking=1989}, що топологічний простір $X$ називається:
\begin{itemize}
  \item \emph{локально компактним}, якщо кожна його точка $x$ має відкритий окіл $U(x)$ з компактним замиканням $\overline{U(x)}$;
  \item \emph{нуль-вимірним}, якщо в $X$ існує база, що складається з відкрито-замкнених підмножин.
\end{itemize}

\begin{proposition}\label{proposition-3.5}
Нехай $\tau$~--- ліво-неперервна (право-неперервна) локально компактна $T_1$-топологія на напівгрупі $\mathscr{O\!\!I}\!_n(L)$. Тоді $(\mathscr{O\!\!I}\!_n(L),\tau)$ --- нуль-вимірний простір.
\end{proposition}

\begin{proof}
З твердження \ref{proposition-2.5} випливає, що топологічний простір $(\mathscr{O\!\!I}\!_n(L),\tau)$ є цілком гаусдорфовим, а отже, за теоремою 3.3.1 з \cite{Engelking=1989} $(\mathscr{O\!\!I}\!_n(L),\tau)$ є цілком регулярним простором.

Зафіксуємо довільну точку $\alpha\in \mathscr{O\!\!I}\!_n(L)$. Нехай $U(\alpha)$~--- відкритий окіл точки $\alpha$ в просторі $(\mathscr{O\!\!I}\!_n(L),\tau)$ з компактним замиканням $\overline{U(\alpha)}$. Тоді за лемою~\ref{lemma-2.4} маємо, що  $V(\alpha)={\uparrow}_{\preccurlyeq}\alpha\cap U(\alpha)$~--- відкритий окіл точки $\alpha$ з компактним замиканням $\overline{V(\alpha)}={\uparrow}_{\preccurlyeq}\alpha\cap\overline{U(\alpha)}$. Якщо $\overline{V(\alpha)}\setminus V(\alpha)=\varnothing$, то доводити нічого. Тому будемо вважати, що $\overline{V(\alpha)}\setminus V(\alpha)\neq\varnothing$. За лемою~\ref{lemma-2.4} сім'я
$\mathscr{U}=\big\{{\uparrow}_{\preccurlyeq}\beta\colon \beta\in \overline{V(\alpha)}\setminus V(\alpha)\big\}$ є відкритим покриттям множини $\overline{V(\alpha)}\setminus V(\alpha)$. Позаяк множина $\overline{V(\alpha)}\setminus V(\alpha)$ компактна, то $\mathscr{U}$ містить скінченне підпокриття $\mathscr{U}_0=\left\{{\uparrow}_{\preccurlyeq}\beta_1,\ldots,{\uparrow}_{\preccurlyeq}\beta_k\right\}$.  Тоді за побудовою множина
\begin{equation*}
W(\alpha)=\overline{V(\alpha)}\setminus({\uparrow}_{\preccurlyeq}\beta_1\cup\cdots\cup{\uparrow}_{\preccurlyeq}\beta_k)= V(\alpha)\setminus({\uparrow}_{\preccurlyeq}\beta_1\cup\cdots\cup{\uparrow}_{\preccurlyeq}\beta_k)
\end{equation*}
містить точку $\alpha$, і за лемою~\ref{lemma-2.4} є її відкрито-замкненим околом. Очевидно, що $W(\alpha)\subseteq U(\alpha)$. Звідси випливає, що топологічний простір $(\mathscr{O\!\!I}\!_n(L),\tau)$ має відкрито-замкнену базу, а отже, є нуль-вимірним.
\end{proof}

З наступного прикладу випливає, що для довільного натурального числа $n$ у локально ком\-пакт\-ному випадку з неперервності правих (лівих) зсувів не випливає нарізна неперервність напівгрупової операції на топологізовній напівгрупі $\mathscr{O\!\!I}\!_n(L)$.

\begin{example}
Для довільного елемента $x\in L$ позначимо $\mathscr{L}^x=\left\{\alpha=
\left(
\begin{smallmatrix}
  x \\
  y
\end{smallmatrix}
\right)\colon y\in L\right\}$. Очевидно, що $\mathscr{L}^x\subseteq \mathscr{O\!\!I}\!_1(L)$ для довільного $x\in L$. Зафіксуємо довільний елемент $x_0\in L$. Топологію $\tau^{x_0}$ на $\mathscr{O\!\!I}\!_n(L)$ визначимо так:
\begin{itemize}
  \item[(1)] усі ненульові елементи напівгрупи $\mathscr{O\!\!I}\!_n(L)$ є ізольованими точками в просторі $(\mathscr{O\!\!I}\!_n(L),\tau^{x_0})$;
  \item[(2)] сім'я $\mathscr{B}^{x_0}=\left\{ U^{x_0}_{\alpha_1,\ldots,\alpha_k}=\{\boldsymbol{0}\}\cup\mathscr{L}^{x_0}\setminus\{\alpha_1,\ldots,\alpha_k\} \colon \alpha_1,\ldots,\alpha_k\in\mathscr{L}^{x_0} \right\}$ є базою топології $\tau^{x_0}$ в нулі $\boldsymbol{0}$ напівгрупи $\mathscr{O\!\!I}\!_n(L)$.
\end{itemize}

Очевидно, що $(\mathscr{O\!\!I}\!_n(L),\tau^{x_0})$ --- локально компактний гаусдорфовий топологічний простір. Ліві зсуви в топологізовній напівгрупі $(\mathscr{O\!\!I}\!_n(L),\tau^{x_0})$ не є неперервними, оскільки $\alpha\cdot U^{x_0}_{\alpha_1,\ldots,\alpha_k}\nsubseteq U^{x_0}_{\alpha'_1,\ldots,\alpha'_p}$ для довільних $\alpha_1,\ldots,\alpha_k,\alpha'_1,\ldots,\alpha'_p\in \mathscr{L}^{x_0}$ у випадку, коли $\operatorname{dom}\alpha\neq\operatorname{ran}\alpha=\{x_0\}$. Однак, $U^{x_0}_\beta\cdot\alpha=\{\boldsymbol{0}\}$, де $\beta=
\left(
\begin{smallmatrix}
  x_0 \\
  y
\end{smallmatrix}
\right)$ i $y\notin\operatorname{dom}\alpha$, звідки випливає, що праві зсуви в $(\mathscr{O\!\!I}\!_n(L),\tau^{x_0})$ є неперервними, оскільки всі ненульові елементи напівгрупи $\mathscr{O\!\!I}\!_n(L)$ є ізольованими точками в просторі $(\mathscr{O\!\!I}\!_n(L),\tau^{x_0})$.
\end{example}

Нехай $X$~--- непорожня множина та $\mathcal{O}\notin X\times X$. На $\mathfrak{B}_X=X\times X\sqcup\{\mathcal{O}\}$ означимо напівгрупову операцію так:
\begin{equation*}
  (x,y)\cdot(u,v)=
  \left\{
    \begin{array}{cl}
      (x,v),       & \hbox{якщо~} y=u;\\
      \mathcal{O}, & \hbox{якщо~} y\neq u
    \end{array}
  \right.
 \qquad \hbox{i} \qquad (x,y)\cdot\mathcal{O}=\mathcal{O}\cdot (x,v)=\mathcal{O}\cdot\mathcal{O}=\mathcal{O},
\end{equation*}
де $x,y,u,v\in X$. Множина $\mathfrak{B}_X$ з вище означеною напівгруповою оперцією називається \emph{напівгрупою $X{\times} X$-матричних одиниць} \cite{Clifford-Preston-1961}, а у випадку, коли множина $X$ нескінченна, то будемо говорити, що $\mathfrak{B}_X$ --- \emph{нескінченна напівгрупа матричних одиниць}. Очевидно, що напівгрупи $\mathscr{I}^1(L)$ і $\mathscr{O\!\!I}\!_1(L)$ ізоморфні напівгрупі $L{\times} L$-матричних одиниць.

Нагадаємо \cite{Clifford-Preston-1961}, що гомоморфізм з напівгрупи $S$ у напівгрупу $T$ називається \emph{анулюючим}, якщо існує такий елемент $t_0\in T$, що $(s)\mathfrak{h}=t_0$ для всіх $s\in S$.

\begin{lemma}\label{lemma-3.6}
Нехай $(L,\leqslant)$~--- нескінченна лінійно впорядкована множина, $n$ --- довільне натуральне число i $S$~--- напівгрупа. Якщо $\mathfrak{h}\colon \mathscr{O\!\!I}\!_n(L)\to S$~--- неанулюючий гомоморфізм, то образ $(\mathscr{O\!\!I}\!_n(L))\mathfrak{h}$ містить напівгрупу, яка ізоморфна нескінченній  напівгрупі матричних одиниць.
\end{lemma}

\begin{proof}
Якщо гомоморфізм $\mathfrak{h}\colon \mathscr{O\!\!I}\!_n(L)\to S$ ін'єктивний, то образ $(\mathscr{O\!\!I}\!_1(L))\mathfrak{h}$  ізоморфний напівгрупі $\mathscr{O\!\!I}\!_1(L)$, а отже, $(\mathscr{O\!\!I}\!_n(L))\mathfrak{h}$ містить напівгрупу, яка ізоморфна нескінченній  напівгрупі матричних одиниць. Тому надалі будемо вважати, що гомоморфізм $\mathfrak{h}$ не є ін'єктивним. Тоді гомоморфізм $\mathfrak{h}$ породжує на напівгрупі $\mathscr{O\!\!I}\!_n(L)$ конґруенцію $\mathcal{C}_{\mathfrak{h}}$. За теоремою~2 з \cite{Gutik-Shchypel=2024} кожна конґруенція на напівгрупі $\mathscr{O\!\!I}\!_n(L)$ є конґруенцією Ріса, а отже, конґруенція $\mathcal{C}_{\mathfrak{h}}$ породжується ідеалом $I_k=\mathscr{O\!\!I}\!_k(L)$ для деякого натурального числа $k\in\{1,\ldots,n-1\}$. Тоді для довільних $\alpha, \beta\in \mathscr{O\!\!I}\!_{k+1}(L)$ i $\gamma\in\mathscr{O\!\!I}\!_k(L)$ з визна\-чен\-ня напівгрупової операції на $\mathscr{O\!\!I}\!_n(L)$ випливає, що $\alpha \beta\in \mathscr{O\!\!I}\!_{k+1}(L)$ у випадку $\operatorname{ran}\alpha=\operatorname{dom}\beta$, i $\alpha \beta\in \mathscr{O\!\!I}\!_{k}(L)$ у випадку $\operatorname{ran}\alpha\neq\operatorname{dom}\beta$, і $\alpha\gamma, \gamma\alpha \in \mathscr{O\!\!I}\!_{k}(L)$. Отже, гомоморфний  образ $(\mathscr{O\!\!I}\!_{k+1}(L))\mathfrak{h}$ містить напівгрупу ізоморфну нескінченній  напівгрупі матричних одиниць, оскільки образ $(\mathscr{O\!\!I}\!_{k}(L))\mathfrak{h}$ є нулем піднапівгрупи $(\mathscr{O\!\!I}\!_{n}(L))\mathfrak{h}$.
\end{proof}

За теоремою 6 з \cite{Gutik-Pavlyk-Reiter=2009} кожний гомоморфізм з нескінченної напівгрупи матричних одиниць у гаусдорфову зліченно компактну топологічну напівгрупу є анулюючим. Отже, леми~\ref{lemma-3.6} випливає така теорема.

\begin{theorem}\label{theorem-3.7}
Нехай $(L,\leqslant)$~--- нескінченна лінійно впорядкована множина, $n$ --- довільне натуральне число. Тоді кожний гомоморфізм $\mathfrak{h}\colon \mathscr{O\!\!I}\!_n(L)\to S$ у гаусдорфову зліченно компактну топологічну напівгрупу $S$ є анулюючим.
\end{theorem}

Нагадаємо \cite{DeLeeuw-Glicksberg-1961}, що \emph{компактифікацією Бора} топологічної напівгрупи $S$ називається пара $(\mathfrak{b}, \mathbf{B}(S))$ така, що $\mathbf{B}(S)$ --- компактна напівгрупа, $\mathfrak{b}\colon  S\to \mathbf{B}(S)$ --- неперервний гомоморфізм, і якщо $\mathfrak{g}\colon S\to T$~--- неперервний гомоморфізм напівгрупи $S$ у компактну напівгрупу $T$, то існує єдиний неперервний гомоморфізм $\mathfrak{f}\colon \mathbf{B}(S)\to T$ такий, що діаграма
\begin{equation*}
\xymatrix{
 S\ar[r]^{\!\!\mathfrak{b}}\ar[d]_{\mathfrak{g}}  & \mathbf{B}(S)\ar@{-->}[dl]^{f}\\
 T &}
\end{equation*}
комутативна.

З теореми~\ref{theorem-3.7} випливає

\begin{theorem}\label{theorem-3.8}
Нехай $(L,\leqslant)$~--- нескінченна лінійно впорядкована множина, $n$ --- довільне натуральне число. Тоді компактифікація Бора гаусдорфової топологічної напівгрупи $\mathscr{O\!\!I}\!_n(L)$ є тривіальною напівгрупою.
\end{theorem}
%%%%%%%%%%%%%%%%%%%%%%%%%%%%%%%%%%%%%%%%%%%%%%%%%%%%%%%%%%%%%%%%%%%%%%%%%%%%%%%%%%
%\bigskip

%\section*{\textbf{Подяка}}

%Автори висловлюють щиру подяку  рецензентові за цінні поради та зауваження.

%%%%%%%%%%%%%%%%%%%%%%%%%%%%%%%%%%%%%%%%%%%%%%%%%%%%%%%%%%%%%%%%%%%%%%%%%%%%

%\vskip1cm
\end{document}